\newlength{\querylen}
\newcommand{\prob}{\mathbb{P}}
\newcommand{\od}{\overset{{\rm d}}{=}}
\newcommand{\dod}{\overset{{\rm d}}{\to}}
\newcommand{\me}{\mathbb{E}}
\newenvironment{myproof}{\noindent {\it Proof} }{\hfill$\Box$}
\newtheorem{thm}{Theorem}
\newtheorem{lemma}[thm]{Lemma}
\theoremstyle{definition}
\theoremstyle{remark}
\begin{document}
\title{
Asymptotics and
Renewal Approximation in the  Online Selection of   Increasing Subsequence
} 
\author{Alexander Gnedin 
and Amirlan Seksenbayev}

\date{\it Queen Mary, University of London}

\maketitle
\begin{abstract}
\noindent
We revisit 
the problem of maximising the expected length of increasing subsequence that can be
selected from { a } marked Poisson process by an online strategy. 
Resorting to a natural size variable, the problem is represented in terms of a controlled { piecewise} deterministic Markov process with decreasing paths. 
Refining known estimates we obtain fairly complete asymptotic expansions
for the moments,
and using a renewal approximation give a novel proof of the 
central limit theorem for the  length of selected subsequence under the optimal strategy.

\end{abstract}

\noindent
\paragraph{1. Introduction}
Suppose a  sequence of  independent  random marks with given continuous distribution  is observed at times of the unit-rate Poisson process. Each time a mark is observed it can be selected or rejected, with every  decision 
becoming immediately final. What is the maximum expected length  $v(t)$ of increasing subsequence, which can be selected over a given horizon $t$ in  {\it online} fashion  by a nonanticipating  strategy?

Samuels and Steele \cite{SS} introduced this stochastic optimisation problem as offspring of its counterpart with   fixed sample size and used similarity between the two problems to obtain the leading asymptotics, $v(t)\sim \sqrt{2t}$ for $t\to\infty$.  
Remarkably,  the square-root   order of growth
was concluded  from superadditivity, similarly to the seminal Hammersley's argument for the longest ({\it offline} in our context) increasing subsequence \cite{Dan}.

The best known to date bounds are
\begin{equation}
\label{BD1}
\sqrt{2t}- \log(1+\sqrt{2t})+\tilde{c}<  v(t)<\sqrt{2t}.
\end{equation}
Analogous upper bounds are common in  folks literature { \cite{ A2, A1, BG, BR, G1}}. 
The lower bound (for $t$ not too small),
with  explicit constant $\tilde{c}$,  appeared in Bruss and Delbaen \cite{BD1}. In the same paper Bruss and Delbaen also obtained estimates on the variance of the length and in \cite{BD2} they
proved  a  functional limit theorem for  this and other characteristics  of the optimal selection process.

The  approach in \cite{BD1, BD2} drew heavily on the concavity of  $v(t)$ and  involved martingale arguments.
In this paper we revisit the problem in light of the  
Samuels-Steele observation \cite{SS} that changing the scale to $z=t^{1/2}$ 
yields an approximate linearisation. 
We improve upon (\ref{BD1})
by obtaining an asymptotic expansion for the optimal expected length
\begin{equation}
\label{AE}
v(t)=
\sqrt{2t}-\frac{1}{12}\log t+c^* { + \frac{\sqrt{2}}{144 \sqrt{t}} +  O(t^{-1})},~~~~t\to\infty,
\end{equation}
similarly refine known estimates of the variance and
show that there is a simple
 selection strategy  within $O(1)$  from the optimum. 
The finer asymptotics (\ref{AE}) up to a bounded term is obtained by bootstrapping the optimality equation similarly to the method used in \cite{BG}.
But justifying convergence of the remainder and the expansion beyond $O(1)$ require much more probabilistic insight.
Our main novelty here is a representation  of the selection problem in terms of  a controlled {piecewise} deterministic Markov process in one dimension. 
 On this way, we will
use comparison with an alternating renewal process
 to explain the logarithmic term in (\ref{AE}) and to give a new proof of the  central limit  theorem for the length of increasing subsequence under the optimal and some suboptimal strategies.

Notation. We will use $\sim$ for asymptotic expansions written without estimate of the remainder, e.g.
 $f(t)\sim f_1(t)+f_2(t)+\dots+f_k(t)$  as $t\to\infty$ means that $ f_{i+1}(t)=o(f_i(t))$ for $1\leq i<k$. We denote $c^*, c_0, c_1$ etc some  absolute constants, and $c$  a constant with context-dependent value.



\paragraph{2. Planar Poisson setup and the leading asymptotics} 
Standardising the distribution of marks to $[0,1]$-uniform leads to a  natural setting of the problem with horizon $t$ in terms of the unit-rate  Poisson random measure in the rectangle $[0,t]\times[0,1]$.
The generic  atom $(s,x)$ is interpreted as mark $x$ observed at time $t$, whereupon a selection/rejection decision must be made solely on { the }base of the allocation of atoms within $[0,s]\times [0,1]$.
A  sequence $(s_1,x_1),\dots,(s_n,x_n)$  of atoms is said to be increasing if it is a chain in the partial order in two dimensions, that is $0<s_1<\dots<s_n$ and $0<x_1<\dots<x_n$. The task 
is to maximise the expected length of an increasing  sequence over  selection strategies adapted to the aforementioned information.

To solve the optimisation problem  it is sufficient to consider a relatively small class of strategies defined recursively 
by means of some {acceptance window} 
 $\psi(t,s,y)$  
satisfying $0\leq\psi(t,s,y)\leq 1-y$ for $0\leq s\leq t<\infty$ and  $y\in[0,1]$.
The corresponding   strategy selects
observation $(s,x)\in[0,t]\times[0,1]$  if and only if $0<x-y\leq \psi(t,s,y)$, where $y$ is the { running maximum}, i.e. the last (hence the highest) mark selected before time $s$, with the convention that $y=0$ if no selections have been made.
Note that the running maximum process and the selected chain uniquely determine one another.

The acceptance window can be regarded as a control function for the 
running maximum, which is a right-continuous Markov process 
$Y=(Y(s), ~0\leq s\leq t)$ starting with $Y(0)=0$, with piecewise constant paths increasing by positive jumps.
At time $s$ in state $y$  a transition  occurs at  rate $\psi(t,s,y)$, and if  $Y$ jumps the increment $Y(s)-Y(s-)$ is uniformly distributed on $[0,\psi(t,s,y)]$. 
For the optimal process, the expected number of jumps is maximal.

Intuitively, a large acceptance window steers $Y$ from $0$ to  about $1$ in just a few jumps. On the other hand,  a small acceptance window makes the jumps rare, so the  
time resource expires before a substantial number of selections is made.

For instance, the {\it greedy} strategy has the largest possible acceptance window  $\psi(t,s,y)=1-y$.   The strategy selects the sequence of records { \cite{Records}}, which has the expected length 
given by the exponential integral function
\begin{equation}\label{greedy}
{\rm Ein}(t)=\int_0^t \frac{1-e^{-s}}{s}\,{\mathrm{d}s}\sim \log t, ~~~t\to\infty.
\end{equation}
The greedy strategy is optimal for $t\leq1.345\cdots$, when the expected number of records is not bigger than 1.

A {\it stationary} strategy  has  acceptance window of the form $\psi(t,s,y)=(1-y)\wedge\delta(t)$, depending neither on the time of observation
nor on the running maximum, as long as $Y$ does not overshoot $1-\delta(t)$. 
 The asymptotic optimality in the principal term is achieved within this class. 
In the remaining part of this section, we sketch a proof which absorbs some ideas from the previous work { \cite{A1, BG, G1, SS}}.

Choose  $\delta=\delta(t)$. 
Up to the first moment when a selected mark exceeds $1-\delta$, 
the running maximum $Y$  coincides  
with a compound Poisson process $S$,  characterised by the jump rate $\delta$ and  the $[0,\delta]$-uniform  distribution of increments.
For  $\delta\to 0$ but $t\delta \to\infty$, the number of  jumps of $S$ over  time $t$ is asymptotic to $t \delta$
(in the mean-square sense), and the number of jumps until  $S$ passes $1-\delta$ is asymptotic to $2/\delta$.
The maximum of $(t\delta)\wedge (2/\delta)$ is attained for $\delta^*(t)= \sqrt{2/t}$, which 
results in the expected length asymptotic to $\sqrt{2t}$. 
After the first selection above $1-\delta^*$  the strategy is greedy, with the expected number of choices being $O(1)$, hence  not affecting the leading asymptotics.

In the sequel under the stationary strategy we shall mean the one with $\delta=\delta^*$, that is with $\psi(t,s,y)=(1-y)\wedge \sqrt{2/t}$.
This strategy maintains a balance between increasing on the marks and time scales, 
so that the running maximum $Y$ fluctuates  about the linear function $s/t$, and
 both resources are exhausted almost simultaneously. 

Let $L_0(t)$ be the length of increasing sequence chosen by the stationary strategy. Representing $L_0(t)$ as a minimum
of two independent renewal processes we have a limit 
\begin{equation}\label{limsta}
\sqrt{3}\,\frac{L_0(t)-\sqrt{2t}}{(2t)^{1/4}}\dod \eta,
\end{equation}
with $\eta= \xi_1\wedge (\sqrt{3}\xi_2)$, where  $\xi_1$ and $\xi_2$ are independent standard normal variables.
Specialising formulas for moments found in  \cite{Nada},
${\mathbb E}\eta= -\sqrt{2/\pi}$, ${\rm Var}(\eta)=2-2/\pi$.

Now, for the compound Poisson process $S$ controlled 
by $\delta^*$,  the expected number of jumps is exactly  equal to $\sqrt{2t}$.
To prove the upper bound (\ref{BD1}) we will show that  $S$ can be identified with the optimal chain 
in an online selection problem with a weaker {mean-value} constraint.
To that end, consider a problem of  online selection  from the Poisson random measure in  unbounded domain $[0,t]\times [0,\infty)$,  but with the restriction that
 observation $(s,x)$ is available for selection only if $0<x-y\leq 1$, where $y$ is the running maximum at time $s$.  Suppose the objective is to 
maximise the expected length of the selected chain subject to the constraint that the {\it mean} mark of the ultimate selection does not exceed  $1$.
Clearly, every strategy with choices from the bounded rectangle $[0,t]\times[0,1]$ is admissible also in the extended scenario; in particular, it has the last 
selected mark not exceeding 1 almost surely. In the extended setting, the observations satisfying the chain condition arrive by a unit-rate Poisson process independent of the marks.
Whichever selection strategy, if $y$ is the last mark chosen before time $s$, the next (if any) mark which comes in question is $y+\xi$, where $\xi$ is $[0,1]$-uniformly distributed  and independent of the previous observations.
Let $(s_i, \xi_i)$ be the increasing sequence of observation times of such marks and $\xi_i$ their associated  uniform variables. A decision on observation at time $s_i$ can be encoded into  a 0-1 random variable $\pi_i$
adapted to the Poisson random measure within $[0,s_i]\times[0,\infty)$.
 With this notation  the  task becomes a  version of the  knapsack problem 
$${\mathbb E}\left(\sum_i \pi_i \right)\to\max,~~~{\rm subject~to~~}      {\mathbb E}\left(\sum_i \pi_i \xi_i      \right)\leq 1,~~~\pi_i\in\{0,1\},$$
 with the Lagrange function 
$${\mathbb E}\left(\sum_i \pi_i (1- \lambda\xi_i)\right).$$
Here, $\pi_i$'s enter linearly hence  the maximum
is attained by the indicators $\pi_i^*=1(\xi_i<1/\lambda)$. The multiplier $\lambda^*=(t/2)^{1/2}$ is found from the constraint 
$$1={\mathbb E}\left(\sum_i \xi_i 1(\xi_i<1/\lambda)\right)=\frac{t}{2\lambda^2}.$$ 
Since $1/\lambda^*=\delta^*$, the selected chain has the same distribution as the compound Poisson process $S$, whence the upper bound in (\ref{BD1}).

\paragraph{3. The optimality equation} The stationary strategy lacks the following self-similarity feature inherent to the overall optimal strategy.
If at time $s$ the running maximum is $y$,  further selections are to be made from $[s,t]\times [y,1]$, which is  an  independent subproblem, equivalent 
to the original problem in $[0,t']\times[0,1]$ with $t'=(t-s)(1-y)$. 
This implies that it is sufficient to optimise over the class of {\it self-similar} strategies with acceptance window of the form
\begin{equation}\label{psi}
\psi(t,s,y)=(1-y) \ \varphi((t-s)(1-y))
\end{equation}
for some $\varphi:[0,\infty)\to[0,1]$. Such a strategy accepts mark $x$ at time $s$ if and only if 
$$0<\frac{x-y}{1-y}\leq \varphi((t-s)(1-y)).$$

The  dynamic  programming principle leads to the optimality equation 
for the maximal expected length $v(t)$,
\begin{equation}\label{uDP0}
v'(t)=\int_0^1 (v(t(1-x))+1-v(t))_+ \,{ \mathrm{d}x}, ~~~v(0)=0,
\end{equation}
The optimal acceptance window is
$\psi^*(t,s,y)=(1-y)\varphi^*((t-s)(1-y))$, where 
$\varphi^*(t)=1$ 
if $v(t)\leq1$ (when $v(t)$ is given by (\ref{greedy})),
and otherwise   $\varphi^*(t)$ is defined implicitly as  
the unique solution to 
\begin{equation*} 
v(t(1-x))+1-v(t)=0.
\end{equation*}

See { \cite{BD1}} for the derivation of (\ref{uDP0}), properties and estimates of $\varphi^*(t)$ and  $v(t)$.
Our focus is on the  asymptotic expansion for large $t$.

With the change of variable
$$u(z):=v(z^2)$$
the optimality equation  (\ref{uDP0}) becomes equation of convolution type 
\begin{equation}\label{uDP1}
u'(z)=4\int_0^z (u(z-y)+1-u(z))_+(1-y/z) \ { \mathrm{d}y}, ~~~u(0)=0.
\end{equation}
We set $\theta^*(z)=z$ if  $u(z)\leq 1$, and otherwise define $\theta^*(z)$ to be the unique solution to
\begin{equation}\label{thetaeq}
u(z-y)+1-u(z)=0.
\end{equation}
By monotonicity we can re-write (\ref{uDP1}) as
\begin{equation}\label{uDP2}
u'(z)=4\int_0^{\theta^*(z)} (u(z-y)+1-u(z))(1-y/z) \ { \mathrm{d}y}, ~~~u(0)=0.
\end{equation}

\vskip0.5cm
\noindent
{\bf 4. Asymptotics I}
Let ${\cal I}g$ be the integral operator  acting on functions $g\in C^1[0,\infty)$ as  
$${\cal I}g(z)=4\int_0^z (g(z-y)+1-g(z))_+(1-y/z)\ { \mathrm{d}y}.$$ 
In this notation  equation (\ref{uDP1}) becomes $u'={\cal I}u$.
Note that we can write 
 ${\cal I}g(z)=4\int_0^{\theta(z)}(\cdots){ \mathrm{d}y}$ 
 if the equation  $g(z-y)+1-g(z)=0$ has a unique solution $y=\theta(z)$.

The following lemma resembles a familiar comparison method to  estimate solutions of differential equations (see \cite{DeB}, section 9.1).  

\begin{lemma}\label{L1} If $g'(z)>{\cal I}g(z)$ for all sufficiently large $z$ then $\limsup\limits_{z\to\infty} (u(z)-g(z))<\infty$.
Likewise, if $g'(z)<{\cal I}g(z)$ for all sufficiently large $z$ then $\liminf\limits_{z\to\infty} (u(z)-g(z))>-\infty$.
\end{lemma}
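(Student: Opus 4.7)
The plan rests on two properties of the operator $\mathcal{I}$. First, $\mathcal{I}$ is invariant under constant shifts: $\mathcal{I}(g+c)=\mathcal{I}g$ for every constant $c$, because $g(z-y)+1-g(z)$ is unchanged by adding a constant to $g$. Second, $\mathcal{I}$ is monotone at contact points: if continuous functions $h_1,h_2$ satisfy $h_1\le h_2$ on $[0,z_*]$ with equality at $z_*$, then $h_1(z_*-y)+1-h_1(z_*)\le h_2(z_*-y)+1-h_2(z_*)$ pointwise in $y\in[0,z_*]$, hence $\mathcal{I}h_1(z_*)\le\mathcal{I}h_2(z_*)$ after taking positive parts and integrating against $(1-y/z_*)$.

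For the first assertion I would fix $z_0$ beyond which $g'(z)>\mathcal{I}g(z)$, set $C:=\sup_{z\in[0,z_0]}(u(z)-g(z))$ (finite by continuity), and for arbitrary $\varepsilon>0$ introduce $\tilde g:=g+C+\varepsilon$. Shift-invariance gives $\tilde g'>\mathcal{I}\tilde g$ on $[z_0,\infty)$, and by construction $u<\tilde g$ on $[0,z_0]$. Suppose, for contradiction, that the closed set $\{z\ge z_0:u(z)\ge\tilde g(z)\}$ is nonempty, and let $z_1$ be its infimum. Continuity forces $u(z_1)=\tilde g(z_1)$ and $u\le\tilde g$ throughout $[0,z_1]$, so the contact-monotonicity yields $\mathcal{I}u(z_1)\le\mathcal{I}\tilde g(z_1)=\mathcal{I}g(z_1)<\tilde g'(z_1)$. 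On the other hand, $u-\tilde g\le 0$ on $[0,z_1]$ with a zero at $z_1$ implies the left-derivative inequality $u'(z_1)\ge\tilde g'(z_1)$, while $u'(z_1)=\mathcal{I}u(z_1)$ by the optimality equation. These two inequalities are incompatible, so no such $z_1$ exists, hence $u<\tilde g$ on $[z_0,\infty)$. Letting $\varepsilon\downarrow 0$ gives $\limsup_{z\to\infty}(u(z)-g(z))\le C<\infty$. The $\liminf$ statement follows by the symmetric argument with $\tilde g:=g-C-\varepsilon$ and the roles of $u$ and $\tilde g$ swapped.

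The main obstacle is exactly what makes shift-invariance indispensable: the operator $\mathcal{I}$ is \emph{not} monotone in its argument in the pointwise sense, because $g$ enters the integrand with opposite signs at $z-y$ and at $z$, so a global bound $u\le g+C$ alone does not imply $\mathcal{I}u\le\mathcal{I}g$. It is precisely at the contact point $z_1$, where the contributions of $u(z_1)$ and $\tilde g(z_1)$ cancel inside the positive part, that the comparison becomes one-sided in $u(z_1-y)$ and the estimate closes. The remaining points are routine: $u\in C^1$ by the Volterra-type equation $u'=\mathcal{I}u$ with $\mathcal{I}u$ continuous in $z$, and $g\in C^1[0,\infty)$ is assumed, so the left-derivative comparison at $z_1$ is legitimate.
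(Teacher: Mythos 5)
Your argument is correct and takes essentially the same route as the paper's own proof: shift-invariance of $\mathcal{I}$, a first-contact-point argument, and the observation that $\mathcal{I}$ becomes monotone between two functions that touch from one side. Your choice $C=\sup_{[0,z_0]}(u-g)$ makes the pointwise domination hold on all of $[0,z_1]$ and thereby sidesteps the paper's extra technical condition $z-\theta^*(z)>z_0$, which the paper needs only to restrict attention to the effective range of integration where the positive part is nonzero.
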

\begin{myproof}. Observe that ${\cal I}g={\cal I}(g+c)$ for constant $c$. Since $u(z)$ increases to $\infty$ we may choose $z_0$ large enough to satisfy both $z-\theta^*(z)>z_0$ and $g'(z)>{\cal I}g(z)$ for $z>2z_0$.
Assume to the contrary that $\limsup\limits_{z\to\infty} (u(z)-g(z))=\infty$. Choosing $c$ large enough we will achieve that  
$u(z)<g(z)+c$ for $z_0<z<z_1$,  and $z_1>2z_0$  for  $z_1:=\min\{z>z_0:  u(z)=g(z)+c\}$, which exists by the assumption.  Then for $y<z_1$
$$(u(z_1-y)+1-u(z_1))_+<((g(z_1-y)+c)+1-(g(z_1)+c))_+,~~~$$
hence  my monotonicity of the integral $u'(z_1)={\cal I}u(z_1)\leq {\cal I}(g+c)(z_1)={\cal I}g(z_1)<g'(z_1)$.
But this is a contradiction since 
$u'(z_1)\geq (g+c)'(z_1)= g'(z_1)$ by definition of $z_1$ as the   location where $u$ first reaches $g+c$.
The second part of the lemma is argued similarly.
\end{myproof}
\vskip0.2cm

We will now compare the solution to (\ref{uDP1})  with various test functions.  
Let $u_1(z):=\alpha_1z$. We have $u'_1(z)=\alpha_1$ and for 
$\theta_1(z):=1/\alpha_1$, 
$${\cal I}u_1(z)=4\int_0^{\theta_1(z)} (u_1(z-y)+1-u_1(z))(1-y/z) \ { \mathrm{d}y}\to  \frac{2}{\alpha_1}, ~~~~z\to\infty.$$   
The match  $\alpha_1= {2}/{\alpha_1}$ occurs at $\alpha_1^*:=\sqrt{2}$, thus
by the lemma $\limsup\limits_{z\to\infty}( u(z)-u_1(z))<\infty$ for $\alpha_1>\sqrt{2}$ and therefore
$\limsup\limits_{z\to\infty} u(z)/z\leq \sqrt{2}$.
Likewise,
the second part of { the} lemma yields $\liminf\limits_{z\to\infty} u(z)/z\geq \sqrt{2}$. These bounds imply 
$u(z)\sim \sqrt{2}\,z$.

We try next  functions $u_2(z):=\sqrt{2}z+\alpha_2\log (z+1)$ (we take $\log(z+1)$ and not $\log z$  to avoid the annoying singularity at $0$). 
Solving $u_2(z-y)+1-u_2(z)=0$, for large $z$ we get expansion
\begin{equation}\label{theta2}
\theta_2(z)\sim \frac{1}{\sqrt{2}}-\frac{\alpha_2}{2(z+1)}.
\end{equation}
We may proceed with only the first term in (\ref{theta2}) since the second makes a negligible $O(z^{-2})$ contribution to ${\cal I}u_2(z)$ which expands as
\begin{equation*}
{\cal I} u_2(z)\sim \sqrt{2}-\left( \frac{1}{3}+\alpha_2\right)\frac{1}{z+1}.
\end{equation*}
With $u_2'(z)=\sqrt{2}+\alpha_2/(z+1)$ the match occurs when 
$$\alpha_2=-\left( \frac{1}{3}+\alpha_2\right),$$
that is for $\alpha_2^*:=-1/6$. It follows from the lemma that $(u(z)-\sqrt{2}z)/\log(z+1)\to \alpha_2^*$, that is 
$$u(z)\sim \sqrt{2}z-\frac{1}{6}\log z.$$

To further refine the approximation we try  
\begin{equation}\label{u3}
u_3(z):=\sqrt{2}\,z-\frac{1}{6} \log (z+1)+\frac{\alpha_3}{z+1}.
\end{equation}
This time we need to calculate with higher precision, hence  take two terms 
\begin{equation}\label{theta3}
\theta_3(z)\sim \frac{1}{\sqrt{2}} +\frac{1}{12(z+1)}.
\end{equation}
Expanding the integrand and integrating:
$${\cal I}u_3(z)\sim \sqrt{2}-\frac{1}{6(z+1)}+\left(\alpha_3-\frac{1}{36 \sqrt{2}}- \frac{1}{3}\right)\frac{1}{(z+1)^2}\,.$$
To match with 
$$u_3'(z)= \sqrt{2}-\frac{1}{6(z+1)}  -\frac{\alpha_3}{ (z+1)^{2}}$$
we must choose   $\alpha_3^*:= 1/6 +\sqrt{2}/144$.
Taking $\alpha_3$ bigger or smaller than $\alpha_3^*$, allows us to sandwich $u$.
However, our 
 comparison method based on the lemma only yields
\begin{equation}\label{u3appr}
u(z)= \sqrt{2}\,z-\frac{1}{6}\log z+O(1), ~~~~z\to\infty,
\end{equation}
 since the third  term in (\ref{u3}) is  already bounded. 
A different approach will be applied to show  convergence of the $O(1)$ remainder.

\vskip0.5cm
\noindent
{\bf 5. Piecewise deterministic Markov process}
We represent next the selection problem by means of a piecewise deterministic Markov process in one dimension.

Let $\theta: (0,\infty)\to (0,\infty)$  be a function  satisfying  $0<\theta(z)\leq z$, and let
\begin{equation*}
\lambda(z):=\theta(z)-\frac{\theta^2(z)}{2z}.
\end{equation*}

The following rules define 
a { piecewise} deterministic Markov process $Z$  on $[0,\infty)$ with continuous drift component and random instantaneous jumps:  
\begin{itemize} 
\item[(i)] the process decreases continuously with unit speed,
\item[(ii)] the jumps are negative and  occur at rate $4\lambda(z)$, for $z>0$,
\item [(iii)] if a jump from state $z$ occurs, the jump size has density $(1-y/z)/\lambda(z)$ with support $[0,\theta(z)]$,
\item [(iv)] the process terminates upon reaching $0$.
\end{itemize}

We denote $Z|z$ this process starting in position $z$. The path of $Z|z$
can be constructed by thinning the set of arrivals of 
an inhomogeneous marked Poisson process with intensity  (ii) and marks distributed as in (iii). 
The following procedure is similar to many familiar parking, packing and scheduling models in applied probability.
Let $z_1$ be the rightmost arrival on $[0,z]$ with some mark $y_1$. 
Call $(z_1,z]$ a {\it drift interval} comprised of {\it drift points}, and call
 $z_1$ a {\it jump point}.
Remove all arrivals from the {\it gap} $(z_1-y_1,z_1]$. Then iterate thinning arrivals of the Poisson process, to the left of $z_1-y_1$ in place of $z$.
If after  some iteration  a jump point $z_k$ cannot be found, the drift interval extends from the last defined point $z_{k-1}-y_{k-1}$ (or $z$ in case $k=1$)
to $0$.  
The union of drift intervals corresponds to the range of  $Z|z$, while the gaps are skipped by jumps. 
The jump points  of $Z|z$ divide $[0,z]$ in {\it  cycles}. A cycle, in the right-to-left order, is comprised of  a drift interval followed by a gap. The exception is the leftmost drift interval adjacent  to $0$.
In this picture, the time variable is unambiguously introduced  by requiring that the time to pass $z'\leq z$ is equal 
to the Lebesgue measure of the range intersected with  $[z',z]$.

To  connect to the increasing subsequence problem choose horizon $t$ and  
let $Y$ be the running maximum process under some self-similar strategy (\ref{psi}).  Note that $(1-Y(s))(t-s)$ is the area of  the rectangle, from which selections after time $s$ can be made. Let 
$$\widetilde{Z}(s):=\sqrt{(1-Y(s))(t-s)}, ~~~s\in[0,t],$$
which is a drift-jump process decreasing from $t^{1/2}$ to $0$, with negative jumps $\Delta \widetilde{Z}(s)= \widetilde{Z}(s)-\widetilde{Z}(s-)$ at times of selection.  
The decay of $\widetilde{Z}$ due to the drift is  a strictly increasing continuous process
$$\sigma(s):=t^{1/2}-\widetilde{Z}(s)+\sum_{s'\leq s}\Delta\widetilde{Z}(s').$$
For $\sigma^{\leftarrow}$ the inverse function to $\sigma$, consider  the time-changed process
\begin{equation}\label{Z}
Z(q)=\widetilde{Z}(\sigma^{\leftarrow}(q)),~~~q\leq\sigma(t).
\end{equation}
Identifying the drift rate and jump distribution it is seen that (\ref{Z}) is the process $Z|\sqrt{t}$,
with $\theta$ found by matching the jump rates as 
$$
4\lambda(z)=2z \varphi(z^2).
$$
In particular, $Y$ over horizon $t=z^2$ has the same number of jumps as $Z|z$.
This reduces the optimal selection problem with horizon $t$ to  choosing a control function $\theta$ with the objective to maximise the expected number of jumps of $Z|\sqrt{t}$.

Denote $N_\theta(z)$ the number of jumps of the process  $Z|z$ steered by  given function $\theta$, and let 
$u_\theta(z):={\mathbb E}N_\theta(z)$. With probability $4\lambda(z) { \mathrm{d}z}$ the process moves from a small vicinity of $z$ to $z-y$, with $y$ sampled from the density in (iii), in which case
the expected number of  jumps is equal to $u_\theta(z-y)+1$. Otherwise, the process drifts through to $z- { \mathrm{d}z}$. This decomposition readily yields equation
\begin{eqnarray}\label{w}
u_\theta'(z)=
4 \int_0^{\theta(z)} (u_\theta(z-y)+1-u_\theta(z)){(1-y/z)} \ { \mathrm{d}y},
 ~~~u_\theta(0)=0.
\end{eqnarray}

For the general $\theta$,  the integrand in (\ref{w}) need not be positive and even monotonicity of $u_\theta$ may not hold.
In purely analytic terms, for any fixed $z$, maximising $u_\theta(z)$ over admissible $\theta:[0,z]\to (0,\infty)$ is the problem of calculus of variations. 
The  solution is  $\theta=\theta^*$, defined implicitly by       
 the optimality equation (\ref{uDP1}) and (\ref{thetaeq}).

We shall assume throughout that $\theta$  is bounded and differentiable.
That the optimal $\theta^*$ is bounded can be seen at this stage of our analysis  from  (\ref{thetaeq}) and (\ref{u3appr}).

The asymptotic comparison method based on an analogue of Lemma \ref{L1}
works for (\ref{w}) as well. In particular, for 
\begin{equation*}
\theta_0:=\sqrt{1/2}\wedge z,
\end{equation*} 
we obtain the same expansion as (\ref{u3appr}).



 The decreasing sequence of jump points of $Z|z_0$ is a Markov chain with terminal state $0$.
 Let $U_\theta(z_0,\cdot)$ be the occupation measure on $[0,z_0]$ counting the expected number 
of jump points, in particular $U_\theta(z,[0,z])=u_\theta(z)$. Denote $p(z_0,z),$ for $0\leq z\leq z_0$ the probability that $z$ is a drift point, in particular $p(z_0,z_0)=p(z_0,0)=1$. There is a jump point within $ { \mathrm{d}z}$ only if $z$ does not belong to a gap, hence
the occupation measure has a density which factorises as
\begin{equation*}
U_\theta(z_0,dz)=4\lambda(z)p(z_0,z) \ {\mathrm{d}z}, ~~~0\leq z\leq z_0.
\end{equation*}

\begin{lemma}\label{cover}
 There exists a pointwise limit $p(z):=\lim\limits_{z_0\to\infty} p(z_0,z)$, which satisfies
$$
|p(z_0,z)-p(z)|< ae^{-\alpha(z_0-z)}, ~~~~~0<z<z_0,
$$
with some positive constants $a$ and $\alpha$.
\end{lemma}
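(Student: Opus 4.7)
The plan is to derive Lemma \ref{cover} from exponential mixing of the jump-point Markov chain underlying $Z|z_0$, via a Doeblin-type minorization together with a renewal-overshoot estimate.

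First I would rewrite $p(z_0, z)$ as a first-passage probability. Let $a_1 > a_2 > \cdots$ be the successive jump locations and $b_k = a_k - y_k$ the post-jump positions, with $b_0 := z_0$, and set $\tau = \min\{k \geq 1 : a_k \leq z\}$. From the construction in Section~5, the point $z$ lies in a drift interval exactly when $a_\tau \leq z \leq b_{\tau - 1}$, equivalently $b_{\tau - 1} \geq z$, so
\[
p(z_0, z) \;=\; \mathbb{P}(b_{\tau - 1} \geq z \mid b_0 = z_0),
\]
and the task is to establish exponentially fast convergence of this first-passage probability in $z_0 - z$.

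Second I would extract uniform regularity of the Markov kernel of $\{b_k\}$ from the boundedness and differentiability of $\theta$. Assume $0 < \theta_- \leq \theta(b) \leq \theta_+$ on a half-line $[z_*,\infty)$; for the optimal $\theta = \theta^*$ this follows from (\ref{u3appr}) and (\ref{thetaeq}). Then $\lambda$ is bracketed similarly, and a direct computation of the density of $\Delta = b - b'$ (a convolution of the drift-length distribution and the jump-size distribution) yields uniform exponential tails together with a Doeblin minorization
\[
f_\Delta(\delta \mid b) \;\geq\; \beta\,\mathbf{1}_{[\delta_1,\delta_2]}(\delta), \qquad b \geq z_*,
\]
with constants $\beta > 0$ and $0 < \delta_1 < \delta_2 < \infty$.

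Third, for two starting points $z_0 < z_0'$ I would compare the chains via the standard splitting of the kernel, so that at each step, with probability $\beta(\delta_2 - \delta_1)$, both chains draw their decrement from the common uniform density on $[\delta_1,\delta_2]$. Uniform exponential tails make the number of jumps between $z_0$ and $z$ equal to $\Theta(z_0 - z)$ except on an event of probability $e^{-\Omega(z_0 - z)}$, so a Chernoff count ensures that among the jumps occurring just above $z$ there is, with probability $1 - Ce^{-\alpha(z_0 - z)}$, a long block of common decrements. On this event, a renewal-overshoot analysis inside a bounded window above $z$ (where the kernel is translation-invariant up to $O(1/b)$ error) shows that the joint law of $(a_{\tau-1}, b_{\tau-1}, y_{\tau-1})$ in the two coupled chains agrees up to total-variation error $Ce^{-\alpha(z_0 - z)}$. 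Since $p(\cdot, z)$ is a bounded measurable functional of this triple, the Cauchy bound $|p(z_0, z) - p(z_0', z)| \leq ae^{-\alpha(z_0 - z)}$ follows, giving both existence of $p(z) := \lim_{z_0 \to \infty} p(z_0, z)$ and the claimed rate.

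The main obstacle is that the chain is not translation-invariant---the one-step density $f_\Delta(\cdot \mid b)$ genuinely depends on $b$---so coupled decrements do not equate the two trajectories but leave them permanently offset by $z_0' - z_0$. My strategy to sidestep this is to work only in a bounded window above $z$, where the $b$-dependence of the kernel is of lower order, and apply the classical theory of renewal with a spread-out step distribution (provided by the minorization) to obtain exponential convergence of the entrance law into the window. This plays the role of a rate-of-convergence version of Blackwell's theorem adapted to our Markov-modulated setting.
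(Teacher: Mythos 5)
Your overall plan---construct a coupling of the two processes $Z|z_0$ and $Z|z_0'$ and show the probability of \emph{not} coupling before reaching $z$ decays exponentially in $z_0-z$---is the same as the paper's, and the reduction of $p(z_0,z)$ to a hitting-type statement for the jump-point chain is the right first step. But the specific coupling mechanism you propose has a genuine gap, one you correctly identify but do not close.

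Splitting the decrement kernel via a Doeblin minorization lets you make the \emph{decrements} of the two chains equal with positive probability per step, but, as you observe, this leaves the \emph{positions} permanently offset by $z_0'-z_0$: the minorization component is a fixed density in the decrement, not in the post-jump position, so no amount of matched decrements ever brings the trajectories together. Your proposed repair---a ``renewal-overshoot analysis inside a bounded window above $z$'' exploiting approximate translation invariance of the kernel---is precisely the difficult part and is not carried out. There is no off-the-shelf theorem giving a rate $e^{-\alpha(z_0-z)}$ for the total-variation distance between the entrance laws of a position-dependent chain whose kernel drifts at rate $O(1/b)$; the $O(1/b)$ discrepancies can accumulate over the $\Theta(z_0-z)$ steps between $z_0$ and $z$, and controlling that accumulation is exactly what needs a proof. (The paper's remark on \cite{CutsemYcart}, where a truncation is mishandled and the theorem fails without rate conditions, is a warning that this is a real danger, not a technicality.)

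The paper's coupling sidesteps all of this by coupling positions, not decrements. Run an independent copy of $Z|z_1$ (here $z_1>z_0$) until it first lands on a state $\xi$ that happens to be a drift point of $Z|z_0$; then switch over to $Z|z_0$'s trajectory from $\xi$ onward, which is legitimate by the strong Markov property. The geometric ingredient that makes this work is the uniform bound $\theta(z)<\overline\theta$ on the jump size: if some drift interval of $Z|z_0$ has length at least $\overline\theta$, the other process cannot jump over it and is forced to land inside it, producing a common drift point. Because the drift-interval length stochastically dominates an exponential with rate $\sup 4\lambda<\infty$, each cycle of $Z|z_0$ independently yields such a ``long'' interval with probability at least some $\pi>0$, and a failed cycle has length less than $2\overline\theta$; hence the coupling occurs inside $[z,z_0]$ except with probability at most $(1-\pi)^{\lfloor(z_0-z)/(2\overline\theta)\rfloor}$. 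This gives the exponential bound directly, with no need for any renewal or minorization machinery, and no translation-invariance issue arises. If you want to salvage your own route you would need a quantitative Markov-modulated renewal theorem; the paper's pathwise coupling is both simpler and complete.
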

\begin{proof}
The proof is by coupling.  Choose constant $\overline{\theta}$ big enough to have  $\sup\theta(z)<\overline{\theta}$.
Fix $z<z_0<z_1$ with $z>2\overline{\theta}$ (the latter assumption does not affect the result).
Consider two independent processes  $Z_0$ and $Z_1$ with  $Z_0\stackrel{d}{=}Z|z_0, ~Z_1\stackrel{d}{=}Z|z_1$.
Define $Z'$ by running the process $Z_1$ until it hits a drift point $\xi$ of $Z_0$ , then from this point on switch over to running 
$Z_0$. 
Such a point $\xi$ exists since both processes have a gap adjacent to $0$.
By the strong Markov property, $Z'$ has the same distribution as $Z_1$. If the coupling occurs at some $\xi\in [z,z_0]$, the point $z$ is of the same type (drift or jump) for
both $Z'$ and $Z_0$.

The coupling does not occur within $[z,z_0]$ only if $Z_0$ and $Z_1$ have no common drift points within these bounds.
Given that $y>z$ is a drift point, the probability that the drift interval covering $y$ extends to the left over $y-\overline{\theta}$ is at least $\pi$, for some constant $\pi>0$. 
This follows since the length of drift interval dominates stochastically an exponential random variable with rate 
$\sup 4\lambda(z)<\infty$. 
In particular, the rightmost drift interval, adjacent to $z_0$, is shorter than $\overline{\theta}$ with  probability at most  $1-\pi$, in which case the rightmost cycle is shorter  than $2\overline{\theta}$.
Given $\xi$ is not in the first cycle, the probability that $\xi$ is not in the second is again at most $1-\pi$, in which case also the second cycle   is shorter  than $2\overline{\theta}$.
Continuing so forth we see that $\xi\notin[z,z_0]$ with probability at most $(1-\pi)^k$ for $k=\lfloor (z_0-z)/(2\overline{\theta})\rfloor$. 
This readily implies an exponential bound $|p(z_0, z)-p(z_1,z)|<ae^{-\alpha(z_0-z)}$, uniformly in $z_1>z_0$.
Sending $z_0\to\infty$ we see that $p(z_0,z)$ is a Cauchy sequence, whence the claim.
\end{proof}

In the terminology of random sets, $p(z_0,\cdot)$ is the coverage function (see \cite{Molchanov} p. 23) for the range of $Z|z_0$. 
As $z_0\to\infty$ the range converges weakly to a random set ${\cal Z}\subset [0,\infty)$, comprised of  infinitely many intervals separated by gaps. 
Indeed, let $A(z_0,z)\leq z$ be the maximal point of the range of $Z|z_0$ within $[0,z]$, for $z\leq z_0$. The coupling argument in the lemma also 
shows that $A(z_0,z)$ has a weak limit, $A(z)$, which is sufficient to justify convergence of the range intersected with $[0,z]$, due to the Markov property.
By Sheff{\'e}'s lemma $U_\theta(z_0,\cdot)$ converges weakly to some $U_\theta$, which is the occupation measure for the point process of left endpoints of intervals making up $\cal Z$.

\vskip0.5cm
\noindent
{\bf 6. Reward processes}  Suppose each jump point of $Z|z$ is weighted by some location-dependent reward $r$. 
Let $w_{\theta,r}(z)$ be the total expected reward accumulated by $Z|z$ controlled  by $\theta$. By analogy with (\ref{w}) we have  equation
\begin{equation}\label{wr}
w_{\theta,r}'(z)=4 \int_0^{\theta(z)} (w_{\theta,r}(z-y)+r(z)-w_{\theta,r}(z)){(1-y/z)}{} \ { \mathrm{d}y},\\
 ~~~w(0)=0.
\end{equation}
 On the other hand, we can write $w_{\theta,r}(z)$ as the average over the occupation measure,

\begin{equation}\label{FC}
w_{\theta,r}(z)=\int_0^z r(y)U_\theta(z,{\mathrm{d}y})=4\int_0^z r(y)\lambda(y) p_\theta(z,y) \ { \mathrm{d}y}.
\end{equation}

\begin{lemma}\label{convrem}
For $r$ integrable function, the solution to {\rm (\ref{wr})} has a finite limit
\begin{equation}\label{wU}
\rho_{\theta,r}:=\lim_{z\to\infty} w_{\theta,r}(z)=
{ 4}\int_0^\infty r(y)\lambda(y)p_\theta(y) \ { \mathrm{d}y}.
\end{equation}
If $|r(z)|=O(z^{-\beta})$ as $z\to\infty$ for some $\beta>1$ then $|w_{\theta,r}(z)-\rho_{\theta,r}|=O(z^{-\beta+1})$.

\end{lemma}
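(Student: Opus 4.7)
The plan is to combine the integral representation \eqref{FC} with the exponential convergence rate from Lemma \ref{cover}. Since the control $\theta$ is bounded by standing assumption, $\lambda(z)=\theta(z)-\theta^2(z)/(2z)$ is bounded, and since $0\leq p_\theta(z,y)\leq 1$, the integrand in \eqref{FC} is uniformly dominated by $C|r(y)|$, which is integrable by hypothesis. Lemma \ref{cover} gives pointwise convergence $p_\theta(z,y)\to p_\theta(y)$ as $z\to\infty$ for each fixed $y$, so dominated convergence yields the finite limit
\[
\lim_{z\to\infty} w_{\theta,r}(z)=4\int_0^\infty r(y)\lambda(y)p_\theta(y)\,\mathrm{d}y=\rho_{\theta,r}.
\]

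For the quantitative rate under $|r(z)|=O(z^{-\beta})$ with $\beta>1$, I would write
\[
w_{\theta,r}(z)-\rho_{\theta,r}=4\int_0^z r(y)\lambda(y)\bigl(p_\theta(z,y)-p_\theta(y)\bigr)\,\mathrm{d}y - 4\int_z^\infty r(y)\lambda(y)p_\theta(y)\,\mathrm{d}y,
\]
and estimate the two pieces separately. The tail is controlled directly by $|r(y)|\leq Cy^{-\beta}$ together with the uniform bound on $\lambda p_\theta$, giving a contribution of order $\int_z^\infty y^{-\beta}\,\mathrm{d}y=O(z^{-\beta+1})$. For the main integral I would split at $z/2$. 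On $[0,z/2]$ the uniform bound from Lemma \ref{cover} reads $|p_\theta(z,y)-p_\theta(y)|\leq ae^{-\alpha(z-y)}\leq ae^{-\alpha z/2}$, and since $r\lambda$ is integrable this contribution is exponentially small. On $[z/2,z]$ the polynomial decay gives $|r(y)\lambda(y)|\leq C z^{-\beta}$, while $\int_{z/2}^z e^{-\alpha(z-y)}\,\mathrm{d}y\leq 1/\alpha$, contributing $O(z^{-\beta})$, which is absorbed into the claimed $O(z^{-\beta+1})$.

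I do not foresee a significant obstacle: both the identity \eqref{FC} and the exponential coupling bound of Lemma \ref{cover} have already been established, so the proof reduces to two elementary tail estimates and an application of dominated convergence. The only point requiring a touch of care is to keep the exponential bound usable down to small $y$; this is handled by the split at $z/2$, where one side of the split exploits the exponential closeness of $p_\theta(z,\cdot)$ to $p_\theta(\cdot)$ and the other exploits the polynomial smallness of $r$.
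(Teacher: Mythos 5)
Your argument is correct and follows essentially the same route as the paper: dominated convergence applied to the representation (\ref{FC}) for the existence of the limit, and a split at $z/2$ combining the exponential coupling bound of Lemma \ref{cover} with the polynomial decay of $r$ for the rate. Your version is in fact written a little more carefully than the paper's (your decomposition is an exact identity and you treat the $[z/2,z]$ piece explicitly), but the substance is identical.
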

\begin{proof}
Since $p(z_0,z)\lambda(z)<\overline{\theta}$
the existence of limit follows  from  (\ref{FC}), (\ref{wU}) and Lemma \ref{cover} by the dominated convergence. 
The convergence rate is estimated  by splitting the difference as
$$
\rho_{\theta,r}-w_{\theta,r}(z)={ 4} \int_0^{z/2} r(y)\lambda(y)(p_\theta(z,y)-p_\theta(y))\ { \mathrm{d}y}+{ 4}\int_{z/2}^\infty r(y)\lambda(y)p_\theta(y)\ { \mathrm{d}y},
$$
where 
 the second integral is of the order $O(z^{-\beta+1})$ while the first is of the lesser order $O(e^{-\alpha z/2})$
by Lemma \ref{cover}. 
\end{proof}


\vskip0.5cm
\noindent
{\bf 7. Asymptotics II} Differentiating  (\ref{uDP1}) with an account of  (\ref{thetaeq}) we obtain

\begin{equation}\label{u''}
u^{''}(z)=4\int_0^{\theta^*} (u'(z-y)+r(z)-u'(z))(1-y/z)\ { \mathrm{d}y}, ~~~u'(0)=0.
\end{equation}
where
$$
r(z)=\frac{4}{\lambda(z)z^2}\int_0^{\theta^*(z)} (u(z-y)+1-u(z))y \ { \mathrm{d}y}.
$$ 
Since $\theta^*(z)=z$ for small $z$
this  has a simple pole  at 0,  but the singularity is compensated in (\ref{FC}), so Lemma \ref{convrem} and (\ref{u3appr}) ensure that 
\begin{equation}\label{expder}
u'(z)=\sqrt{2}+O(z^{-1}).
\end{equation}

With (\ref{expder}) at hand, 
expanding in (\ref{thetaeq}) we get $\theta^* (z)=\sqrt{1/2}+O(z^{-1})$. Replacing $\theta^*$ by $\sqrt{1/2}$ in (\ref{uDP1}) incurs remainder of smaller order $O(z^{-2})$ because $\theta^*$ is the stationary point of the integral.
Recalling that $u_2$ (with $\alpha_2^*=-1/6$) satisfies $u_2'(z)={\cal I}u(z)+O(z^{-2})$, for the difference $w=u-u_2$ we obtain equation (\ref{wr}) with $r(z)=O(z^{-2})$, hence $u(z)-u_2(z)$ by Lemma \ref{convrem} 
approaches a finite limit at rate $O(z^{-1})$ as $z\to\infty$. This proves an  expansion 
\begin{equation}\label{expan2}
u(z)= \sqrt{2}\,z-\frac{1}{6}\log z+c^*+O(z^{-1}), ~~~z\to\infty
\end{equation}
with some constant $c^*$.

Our methods are not geared to identify $c^*$, because the initial value $u(0)=0$ was nowhere used, but changing it (e.g., by resorting to a selection problem with terminal payoff) will result in adding $u(0)$ to $c^*$.
Nevertheless, with some more effort it is possible to  go beyond $O(1)$. Let us first estimate the variation of $u'$.

\begin{lemma}\label{variation}

For fixed $\overline{d}>0$, as $z\to\infty$
$$\sup_{0\leq d\leq  \overline{d}}|u'(z+d)-u'(z)|=O(z^{-2}).$$
\end{lemma}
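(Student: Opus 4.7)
\noindent The plan is to exploit the reward representation developed in Sections 5--6. By (\ref{u''}), the derivative $u'$ solves equation (\ref{wr}) with controller $\theta^*$, reward $r$ as given there, and initial value $u'(0)=0$, hence by (\ref{FC})
$$
u'(z)=4\int_0^z r(y)\lambda(y)\,p_{\theta^*}(z,y)\,dy.
$$
The starting point is a sharp estimate of the weight $r(y)\lambda(y)$. Substituting the expansion (\ref{expan2}) into the integral defining $r$ and using $\theta^*(y)=1/\sqrt{2}+O(y^{-1})$, a direct computation produces
$$
r(y)\lambda(y)=\frac{1}{3y^2}+O(y^{-3}),\qquad y\to\infty,
$$
while $r(y)\lambda(y)$ stays bounded as $y\downarrow 0$, the simple pole of $r$ at the origin being cancelled by $\lambda(y)\sim y/2$.

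With this in hand, split the variation as
$$
u'(z+d)-u'(z)=4\int_z^{z+d}r(y)\lambda(y)\,p_{\theta^*}(z+d,y)\,dy+4\int_0^z r(y)\lambda(y)\bigl[p_{\theta^*}(z+d,y)-p_{\theta^*}(z,y)\bigr]\,dy.
$$
The first integral has integrand $O(z^{-2})$ on an interval of length at most $\bar{d}$, so it contributes $O(z^{-2})$ uniformly in $d\in[0,\bar d]$. For the second, applying Lemma \ref{cover} to each of $p_{\theta^*}(z+d,\cdot)$ and $p_{\theta^*}(z,\cdot)$ and using the triangle inequality gives
$$
|p_{\theta^*}(z+d,y)-p_{\theta^*}(z,y)|\leq 2ae^{-\alpha(z-y)},\quad 0<y<z.
$$
I split the resulting integral at $z/2$: over $[0,z/2]$ the bounded weight $r\lambda$ combined with $e^{-\alpha z/2}$ yields a sub-exponentially small contribution, and over $[z/2,z]$ the bound $r\lambda\leq C/z^2$ together with $\int_{z/2}^z e^{-\alpha(z-y)}dy\leq 1/\alpha$ produces $O(z^{-2})$. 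Summing delivers $|u'(z+d)-u'(z)|=O(z^{-2})$ uniformly in $d\in[0,\bar d]$.

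The main subtle point is the need for $y^{-2}$ (rather than merely $y^{-1}$) decay of the weight $r(y)\lambda(y)$: a bound based only on $u(y)\sim\sqrt{2}y$ would give $r\lambda=O(y^{-1})$ and, at best, $|u'(z+d)-u'(z)|=O(z^{-1})$, which is too crude. Thus the argument genuinely relies on the $O(1)$-accurate expansion (\ref{expan2}) from Section 7 being in place prior to invoking Lemma \ref{variation}, as well as on the exponential coupling estimate from Lemma \ref{cover}.
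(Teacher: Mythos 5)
Your proposal is correct and takes essentially the same route as the paper: both express $u'$ via the occupation-measure representation (\ref{FC}) with $r(z)=O(z^{-2})$, split $u'(z+d)-u'(z)$ into the integral over $[z,z+d]$ (trivially $O(z^{-2})$) and the integral over $[0,z]$, and bound the latter by combining the exponential coupling estimate of Lemma \ref{cover} with the quadratic decay of $r\lambda$; your split at $z/2$ is precisely the "Laplace's method" step the paper invokes. One small point in your closing remark: the sharp constant $\tfrac{1}{3}$ in $r(y)\lambda(y)=\tfrac{1}{3}y^{-2}+O(y^{-3})$ is not actually needed (only $r\lambda=O(y^{-2})$ is used), and that order already follows from (\ref{u3appr}) together with $u'(z)=\sqrt{2}+O(z^{-1})$ as established just before (\ref{expan2}); invoking the full $c^*$-accurate expansion is a slight overstatement of the prerequisites, though harmless.
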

\begin{proof}
Using the integral representation (\ref{FC}) of $u'$ with $r(z)=O(z^{-2})$, write
\begin{eqnarray*}
u'(z+d)-u'(z)=\int_z^{z+d} r(y)p(z+d,y)\lambda(y)\ { \mathrm{d}y}+ \int_0^{z} |p(z+d,y)-p(z,y)|\lambda(y)r(y)\ { \mathrm{d}y}.
\end{eqnarray*}
The first integral is obviously $O(z^{-2})$ uniformly in $d\leq\overline{d}$.
By Lemma \ref{cover} the second is estimated as
$$c\int_0^z e^{-\alpha(z-y)} {(y^2+1)^{-1}} { \mathrm{d}y}=O(z^{-2})$$
using Laplace's method.
\end{proof}
The lemma applied to the right-hand side of (\ref{u''}) gives $u''(z)=O(z^{-2})$. In (\ref{uDP2}) we replace $\theta^*$ by $\sqrt{1/2}$, expand $u(z-y)-u(z)=-yu'(z)+O(z^{-2})$ and integrate to obtain with some algebra
$$u'(z)=\sqrt{2}-\frac{1}{6z}+O(z^{-2}).$$ 
Expanding similarly in (\ref{thetaeq}) we get a finer formula for the optimal control function
\begin{equation}\label{theta-2}
\theta^*(z)=\frac{1}{\sqrt{2}}+\frac{1}{12z}+O(z^{-2}), ~~~z\to\infty,
\end{equation}
in  accord with (\ref{theta3}). Since $u_3'(z)={\cal I}u_3(z)+O(z^{-3})$ the difference $w=u-u_3$ satisfies (\ref{wr}) with $r(z)=O(z^{-3}))$, hence invoking Lemma \ref{convrem} we obtain $u(z)-u_3(z)=\hat{c}+O(z^{-2})$
for some constant $\hat{c}$.
This must agree with (\ref{expan2}), therefore $\hat{c}=c^*$.  Thus we have shown
 
\begin{thm}\label{opt-subopt} For the optimal process, the control function  $\theta^*$  satisfies
{\rm (\ref{theta-2})}, and the expected number of jumps has expansion
\begin{equation}\label{expan3}
u(z)=\sqrt{2}\,z-\frac{1}{6}\log z +c^*+\frac{\sqrt{2}}{144\,z}+O(z^{-2}),~~~z\to\infty.
\end{equation}
\end{thm}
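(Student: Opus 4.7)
The plan is a bootstrap argument that alternates between sharpening the expansion of $u$ (via Lemma \ref{convrem} applied to a reward equation for the residual) and sharpening the expansions of $u'$ and $\theta^*$ (via the optimality equations). The coarser expansion (\ref{expan2}) with $O(1)$ remainder is already in hand, and the target is to turn this $O(1)$ into $c^* + \sqrt{2}/(144 z) + O(z^{-2})$ while simultaneously getting the expansion (\ref{theta-2}) of the optimal control.

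First I would control $u'$ at one order. Differentiating (\ref{uDP1}) using (\ref{thetaeq}) yields (\ref{u''}), which is itself a reward equation of type (\ref{wr}) with reward $r(z) = \tfrac{4}{\lambda(z) z^2}\int_0^{\theta^*(z)}(u(z-y)+1-u(z))\,y\,{\mathrm d}y$. Inserting the coarse expansion (\ref{u3appr}) shows $r(z) = O(z^{-2})$, so Lemma \ref{convrem} gives $u'(z)=\sqrt 2 + O(z^{-1})$. Feeding this back into (\ref{thetaeq}) yields $\theta^*(z)=\sqrt{1/2}+O(z^{-1})$. Next I would invoke Lemma \ref{variation}, which by the same integral representation (\ref{FC}) with an $r = O(z^{-2})$ reward gives $\sup_{d\le \overline d}|u'(z+d)-u'(z)| = O(z^{-2})$; applied to the right-hand side of (\ref{u''}) this upgrades the estimate to $u''(z)=O(z^{-2})$. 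Then in (\ref{uDP2}) I replace $\theta^*$ by the stationary point $\sqrt{1/2}$ (the substitution is legitimate because $\theta^*$ is critical for the integral, so the incurred error is $O(z^{-2})$), Taylor expand $u(z-y)-u(z)=-y\,u'(z)+O(z^{-2})$, and integrate to arrive at $u'(z)=\sqrt 2 - \tfrac{1}{6z}+O(z^{-2})$. Expanding (\ref{thetaeq}) once more with this sharper $u'$ produces (\ref{theta-2}).

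For the expansion of $u$ itself, I would compare $u$ with the test function $u_3$ from (\ref{u3}) with $\alpha_3 = \alpha_3^* = 1/6 + \sqrt 2/144$, the value chosen in Section 4 so that $u_3'(z) = {\cal I} u_3(z) + O(z^{-3})$. Writing $w := u - u_3$, the difference of the two equations $u'={\cal I}u$ and $u_3'={\cal I}u_3 + O(z^{-3})$ places $w$ into the reward framework (\ref{wr}) with $r(z) = O(z^{-3})$. Lemma \ref{convrem} with $\beta = 3$ then yields $w(z) = \hat c + O(z^{-2})$ for some constant $\hat c$. Finally, comparing the resulting expansion $u(z) = \sqrt 2\,z - \tfrac{1}{6}\log z + \hat c + \tfrac{\sqrt 2}{144 z} + O(z^{-2})$ with the already-established (\ref{expan2}) forces $\hat c = c^*$, which is (\ref{expan3}).

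The principal obstacle is the step from $u'(z)=\sqrt 2 + O(z^{-1})$ to $u'(z) = \sqrt 2 - \tfrac{1}{6 z} + O(z^{-2})$: it requires both Lemma \ref{variation} (to bound $u''$ and justify the local Taylor expansion of $u$ inside the integral in (\ref{uDP2})) and the fact that $\theta^*$ is the critical point of the integrand, which suppresses the $O(z^{-1})$ error from replacing $\theta^*$ by $\sqrt{1/2}$. Everything else is routine bootstrapping, since once $u'$ and $\theta^*$ are known to sufficient precision the reward equation for $u-u_3$ has a reward decaying like $z^{-3}$ and Lemma \ref{convrem} closes the argument.
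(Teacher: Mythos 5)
Your proposal follows essentially the same route as the paper: differentiate the optimality equation to get a reward equation for $u'$, apply Lemma \ref{convrem} to get $u'(z)=\sqrt 2+O(z^{-1})$, invoke Lemma \ref{variation} to bound $u''$ and justify replacing $\theta^*$ by its stationary value $\sqrt{1/2}$ in (\ref{uDP2}), integrate to sharpen $u'$ and hence $\theta^*$, then compare $u$ with the test function $u_3$ via Lemma \ref{convrem} with $\beta=3$, matching the constant against (\ref{expan2}). The only minor inaccuracy is a label slip at the start (you describe (\ref{expan2}) as having an $O(1)$ remainder when that is (\ref{u3appr}); (\ref{expan2}) already carries $c^*+O(z^{-1})$), and you pass over the paper's remark that $r(z)$ in the reward equation for $u'$ has a compensated simple pole at $0$, but neither affects the substance of the argument.
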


To appreciate the effect of the second term in (\ref{theta-2}) it is helpful to consider
control functions of the kind
\begin{equation}\label{thetagamma}
\theta(z)\sim \frac{1}{\sqrt{2}}+\frac{\gamma}{z}\,,~~~z\to\infty.
\end{equation}
The parameter appears in the asymptotics of solutions to (\ref{w})  as  
\begin{equation}\label{gaga}
u_\theta(z)\sim\sqrt{2}\,z-\frac{1}{6}\log z +c+    \left(\frac{\sqrt{2}}{72}-\frac{\sqrt{2}\gamma}{6} +\sqrt{2}\gamma^2 \right)\,\frac{1}{z}\,
\end{equation}
(whichever $u_\theta(0)$ that only affects the constant).

Constant $c$ in (\ref{gaga}) does not exceed $c^*$ in (\ref{expan3}), but  the relation between the $z^{-1}$-terms can be the opposite.
For instance, for $\theta_0(z)=\sqrt{1/2}\wedge z$ we have
\begin{equation*}
u_{\theta_0}(z)=\sqrt{2}\,z-\frac{1}{6}\log z +    c_0 +  \frac{\sqrt{2}}{72\,z}               +O(z^{-2}).
\end{equation*}

\vskip0.5cm
\noindent
{\bf 8. The variance} For $N_\theta(z)$, the number of jumps of $Z|z$ driven by $\theta$,
 let  $w(z)={\mathbb E}(N_\theta(z))^2$ be the second moment. 
This function satisfies
\begin{equation*}
w'(z)=4\int_0^\theta [w(z-y)+(1+2u_\theta(z-y))-w(z)](1-y/z)\ { \mathrm{d}y},~~~w(0)=0.
\end{equation*}
Integrating the inhomogeneous term this can be reduced to the form (\ref{wr}), but with $r(z)$ of the order of $z$. This equation has properties similar to (\ref{uDP1}), that is
adding a constant to $w$ also gives a solution with some other initial value,
and the right-hand size increases in $w(z-y)$. Hence an analogue of Lemma \ref{L1} can be applied to compare $w$ with various test functions.

We shall consider first the case of optimal $\theta=\theta^*$.
It is an easy exercise to see that $w(z)\sim 2z^2$, hence the leading term in the integrand
is $-4zy+2\sqrt{2}z$, which vanishes at $y=\sqrt{1/2}$. For this reason  the $O(z^{-2})$ remainder  in (\ref{theta-2}) 
will contribute to the solution only $O(1)$, and not $O(\log z)$ as one might expect. 
Using  this fact and (\ref{theta-2})  it is possible to match  the sides of the equation 
by  selecting   coefficients of the
 test function
 $$\hat{w}(z)={ 2}z^2+a_1 z\log z+ a_2 z+a_3(\log z)^2 +  a_4\log z,$$
achieving that  the difference
$w(z)-\hat{w}(z)$ satisfies  an equation of the type (\ref{wr})  with
$r(z)=O(z^{-2}\log z)$. 
Then applying   Lemma \ref{convrem}, $w(z)-\hat{w}(z)\sim c_1+z^{-1}\log z$.
With some help of 
{\tt Mathematica} 
 we arrived at
\begin{equation}\nonumber
w(z)\sim 2z^2-
\frac{\sqrt{2}}{3} z\log z+\left(\frac{\sqrt{2}}{3}+2\sqrt{2}c^*    \right)z+\frac{1}{36} (\log z)^2+\left(\frac{1}{36}-\frac{c^*}{3}\right)\log z + c_1.
 \end{equation}
From this and    (\ref{expan2}) for ${\rm Var}(N_{\theta^*}(z))= w(z)-u^2(z)$ we obtain
$$
{\rm Var}(N_{\theta^*}(z))=  \frac{\sqrt{2}z}{3}+\frac{1}{36}\log z+c_2+O\left( z^{-1}\log z   \right)      \,,~~~z\to\infty.
$$
with 
$c_2:=c_1 -(c^*)^2-{1}/{36}$.
In fact, the value of $c^*$ in (\ref{expan2}) impacts $c_1$ but not  $c_2$, because the latter is invariant 
 under  shifting $u(0)$.

For the general control functions,  the variance is very sensitive to the behaviour of $\theta$. The convergence  $\theta(z)\to\sqrt{1/2}$ alone does not even ensure  that  
$O(z)$ is the right {\it order}  for
${\rm Var}(N_{\theta^*}(z))$. 
If (\ref{thetagamma}) holds we have the asymptotics 
$${\rm Var}(N_\theta(z))\sim \frac{\sqrt{2}z}{3}+\left( \frac{1-8\gamma}{12}\right)\log z\,,~~~z\to\infty.$$

\vskip0.5cm
\noindent
{\bf 9. CLT for the number of jumps} In this section we denote $N(z)$ the number of jumps of $Z|z$ with  some control function
satisfying 
\begin{equation}\label{LT}
\theta(z)=\frac{1}{\sqrt{2}}+O\left( {z^{-1}}\right), ~~{\rm hence}~~\lambda(z)=\frac{1}{\sqrt{2}}+O( {z^{-1}}), ~~~z\to\infty.
\end{equation}

Denote $J_z$  the size of the generic gap having the right endpoint $z$,   with density 
\begin{equation*}
\prob(J_z\in { \mathrm{d}y})=\frac{1-y/z}{\lambda(z)}, ~~~0\leq y\leq \theta(z),
\end{equation*}
and let $D_z$ be the size of the generic drift interval with survival function
\begin{equation}\label{distD}
\prob(D_z\geq y)=\exp\left( -\int_{z-y}^z 4\lambda(s) \ { \mathrm{d}s} \right), ~~~0\leq y\leq z.
\end{equation}
The size of the generic cycle with the right endpoint $z$ can be written as
$$D_z+J_{z-D_z},$$
where $D_z$ and the family of variables  $J_{\cdot}$ are independent, and we set $J_0=0$.

For large $z$,
the expected values of $J_z$ and $D_z$ are about equal, suggesting that about a half of $[0,z]$ is covered by drift and another half is skipped by jumps.
This resembles the behaviour of the stationary (and, as seen from  \cite{BD2}, also of the optimal) selection process in the planar Poisson setting, where the balance is  kept on two scales.

It is useful to see how the mean sizes of gaps and drift intervals 
depend on  $\theta=\theta(z)$:
$$\me J_z= \frac{\theta}{2}-\frac{\theta^2}{12z}+O(z^{-2}),~~~\me D_z =\frac{1}{4\theta}+\frac{1}{8z}+O(z^{-2}).$$
The leading terms match for $\theta$ as in (\ref{LT}), in which case the mean size of a cycle is 
$$\me (J_z+D_z)\sim \frac{1}{\sqrt 2}+\frac{1}{12 z}+O(z^{-2}),$$ 
regardless of the $O(z^{-1})$ term in (\ref{LT}). This expansion explains why  
the second term  in (\ref{expan3}) is $O(\log z)$ (but falls short of explaining the coefficient $-1/6$),
and why the suboptimal strategy in Theorem \ref{opt-subopt} is $O(1)$ from the optimum.


From the convergence of parameters (\ref{LT}) it is clear that 
as $z\to\infty$
\begin{equation*}
D_z\dod \frac{E}{2\sqrt{2}}\,, ~~~J_z\dod \frac{U}{\sqrt{2}},
\end{equation*}
and, observing the joint convergence of $(D_z, J_{z-\,\cdot})$, also that
\begin{equation}\label{cycleconv}
D_z+J_{z-D_z}\dod \frac{E}{2\sqrt{2}}  +\frac{U}{\sqrt{2}}\,,
\end{equation}
where $U\od {\rm Uniform}[0,1]$ and $E\od{\rm Exponential}(1)$ are independent.

The weak convergence (\ref{cycleconv}) of cycle sizes suggests that the behaviour of $N(z)$ for large $z$ can be deduced from that 
of   a renewal process  with the generic step 
$$H:=\frac{E}{2\sqrt{2}}  +\frac{U}{\sqrt{2}}\,$$
which has moments
$$\mu:=\me \,H =\frac{1}{\sqrt{2}}, ~~\sigma^2:={\rm Var}\left(H\right)=\frac{1}{6},~~~\frac{\sigma^2}{\mu^3}=\frac{\sqrt{2}}{3}.$$

Specifically, for 
 the renewal process $R(z):=\max\{n: H_1+\dots+H_n\leq z\}$, with $H_j$'s being i.i.d. replicas of $H$,
we have the familiar CLT
$$\frac{R(z)-z\mu^{-1}}{\sigma\mu^{-3/2}\sqrt{z}}   
\dod    {\cal N}(0,1),$$
and one can expect that the same limit holds for $N(z)$.
This line should be pursued with care,
because local discrepancies may accumulate on the large scale and bias {centring} or even the type of the limit distribution.
In the approach taken in the sequel,  we amend some details of the method of stochastic comparison found in  \cite{CutsemYcart}
(see a remark below). 
To that end, with initial  state $z\to\infty$, we focus on the cycles that lie within some range $[\underline{z},z]$, where the truncation parameter
$\underline{z}$ is properly chosen to warrant approximation of the whole process.

The asymptotics (\ref{LT}) implies that there exists a constant $c>0$ such that for all sufficiently large $\underline{z}$
the parameters can be bounded as
\begin{eqnarray*}
\frac{1-c/\underline{z}}{\sqrt{2}}&<&\lambda(z)<\frac{1+c/\underline{z}}{\sqrt{2}}\\
\label{approxparam2}
\frac{1}{\sqrt{2}(1+c/\underline{z})}& <&\theta(z)< 
\frac{1}{\sqrt{2}(1-c/\underline{z})}
\end{eqnarray*}
uniformly in $z>\underline{z}$.
Replacing the variable rate in (\ref{distD})
by constant yields the bounds
$$  \left((1+c/\underline{z})^{-1}\frac{E}{2\sqrt{2}} \right)\wedge (z-\underline{z}) <_{\rm st}  D_z\wedge(z-\underline{z})<_{\rm st} (1-c/\underline{z})^{-1}\frac{E}{2\sqrt{2}},$$
where and henceforth $<_{\rm st}$ denotes the stochastic order. 
Observing that the survival function of $J_z$ is convex, we may bound the jump as 
$$\lambda(z)U<_{\rm st} J_z<_{\rm st}\theta(z)U,$$
whence from (\ref{approxparam2})
 $$ (1+c/\underline{z})^{-1}\frac{U}{\sqrt{2}} <_{\rm st} J_z <_{\rm st} (1-c/\underline{z})^{-1}\frac{U}{\sqrt{2}}\,,~~~~z\geq \underline{z}.$$
From these estimates follow stochastic bounds on the cycle size
\begin{equation}\label{boundsH}
 ((1+c/\underline{z})^{-1}H) \wedge (z-\underline{z})   <_{\rm st} (D_z+J_{z-D_z})\wedge (z-\underline{z}) <_{\rm st} (1-c/\underline{z})^{-1}H\,,~~~~z\geq \underline{z}.
\end{equation}

Setting the bounds (\ref{boundsH}) in terms of multiples of the same random variable $H$ is convenient
in combination with the obvious scaling property: for $d>0$, $R(d\,\cdot)$ is the renewal process with the generic step $dH$.
Let $N(z,\underline{z})$ be the number of cycles of $Z|z$, which fit completely within $[\underline{z},z]$. 
As in { \cite{CutsemYcart}}, from (\ref{boundsH}) we conclude that 
\begin{eqnarray}\label{boundsR} 
 R\left ( (z-\underline{z}) (1-c/\underline{z}))  \right)   <_{\rm st} N(z,\underline{z})<_{\rm st}  R\left ( (z-\underline{z}) (1+c/\underline{z}))\right), ~~~z\geq \underline{z}.
\end{eqnarray}

Letting  $z\to\infty$ then $\underline{z}\to\infty$,  and appealing to $R(z)/z\to \mu^{-1}$ a.s., (\ref{boundsR}) implies 
a weak  law of large numbers for $N(z)$,
\begin{equation}\label{LLN}
\frac{{N(z)}}{z}\dod \frac{1}{\mu}, ~~~z\to\infty.
\end{equation}

We aim next to show the CLT for $N(z)$, that is
\begin{equation}\label{CLTN}
\frac{N(z)-z\mu^{-1}}{\sigma \mu^{-3/2}\sqrt{z}}\dod {\cal N}(0,1), ~~~z\to\infty.
\end{equation}
To that end, we choose $\underline{z}=\omega\sqrt{z}$, where $\omega>0$ is a large parameter. 
Start with splitting
$$N(z)-z\mu^{-1}= (N(z,\underline{z})-(z-\underline{z})\mu^{-1})+(N(z)-N(z,\underline{z})-\underline{z}\mu^{-1}),$$
where $N(z)-N(z,\underline{z})$ counts the cycles that start in $[0,\underline{z}]$;
this component is  annihilated by the scaling, since by (\ref{LLN})
$$\frac{N(z)-N(z,\underline{z})-\underline{z}\mu^{-1}}{\sqrt{\underline{z}}}\dod 0,$$
and the same is true with $\sqrt{\underline{z}}$ replaced by bigger $\sqrt{z}$.
For the leading contribution due to $N(z,\underline{z})$ we obtain using  dominance (\ref{boundsR}) and the CLT for $R(z)$
\begin{eqnarray*}
\prob\left(\frac{N(z,\underline{z})-(z-\underline{z})\mu^{-1}}{\sigma\mu^{-3/2}\sqrt{z}}\leq x\right)\geq 
\prob\left(\frac{R((z-\underline{z})(1+c/\underline{z}))- (z-\underline{z})\mu^{-1}}{\sigma\mu^{-3/2}\sqrt{z}}\leq x\right)=\\
\prob\left(\frac{R((z-\underline{z})(1+c/\underline{z}))- (z-\underline{z})(1+c/\underline{z})\mu^{-1}}{\sigma\mu^{-3/2}\sqrt{z}} + \frac{(z-\underline{z})c}{\omega z \mu^{-1/2} \sigma}       \leq x\right)\to 1-\Phi\left(x-\frac{c}{\omega\sigma\mu^{-1}}\right),
\end{eqnarray*}
as $z\to\infty$. 
Letting $\omega\to\infty$ 
$$\limsup\limits_{z\to\infty}\prob\left( \frac{N(z)-z\mu^{-1}}{\sigma\mu^{-3/2}\sqrt{z}}\leq x\right))=\limsup\limits_{z\to\infty}\prob\left(\frac{N(z,z_0)-z\mu^{-1}}{\sigma\mu^{-3/2}\sqrt{z}}\leq x\right)\geq 1-\Phi(x).$$
The  opposite inequality is derived similarly. 
Hence (\ref{CLTN}) is proved.

\vskip0.2cm
\noindent
{\bf Remark} The renewal-type approximation for  decreasing Markov chains on $\mathbb N$, 
using stochastic comparison appeared in { \cite{CutsemYcart}}. 
However, their Theorem 4.1 on  the  normal limit for the absorption time fails without additional assumptions on the quality of convergence of the step distribution.
For instance, if the decrement in position $z>8$ assumes values  $1$ and $2$ with probabilities $1/2\pm 1/\log z$, the mean absorption time is asymptotic to $2z/3$,
with the remainder being strictly of the order $z/\log z$, 
 therefore not annihilated by the $\sqrt{z}$ scaling.
The error in { \cite{CutsemYcart}} appears on {the} bottom of page 996, where the truncation parameter ($m$, a counterpart of our  $\underline{z}$) is assumed  independent of the initial state.
A recent paper { \cite{AlsmeyerMarynych}},  also concerned with the lattice setting, gives conditions on the rate
of convergence of decrements  in some probability metrics, to ensure  
 the normal approximation of the absorption time. 

\vskip0.2cm
\noindent
{\bf Remark} 
It is of interest to look at the properties of the random set
 $\cal Z$ which, intuitively,  describes an infinite selection process.
This limit object can be interpreted  in the spirit of the boundary theory of  Markov processes: the state space  $[0,\infty)$ has a one-point compactification (the entrance Martin boundary) approached as  the initial state of $Z|z$ tends to $\infty$. 
Applying the coupling argument as in Lemma \ref{cover} one can show that, at large distance from the origin,   $\cal Z$ 
behaves similarly to a stationary  alternating renewal process,  with uniformly distributed gaps and exponential drift intervals. 
The coverage probability and the occupation measure satisfy $p(z)\to 1/2$ and $U([0,z])\sim \sqrt{2}z$, $z\to\infty$.
There have been some related work on Markov processes on the real line which at distance from the origin behave {similarly} to renewal processes  { \cite{Korshunov}}, but  adapting existing results to our problem would require reverting the direction of time.

\vskip0.5cm
\noindent
{\bf  10. Summary} We  summarise  our findings in terms of the original problem. Let 
$L_\varphi (t)$ be the length of increasing subsequence selected by a self-similar strategy with  the acceptance window  of the form (\ref{psi}).

\begin{thm} \label{final}
\noindent
\begin{enumerate} 
\item[\rm(a)] The optimal strategy has the acceptance window of the form {\rm (\ref{psi})}  with

\begin{equation*}
\varphi^*(t)= \sqrt{\frac{2}{t}}-\frac{1}{3t}+O(t^{-3/2}),~~~t\to\infty,
\end{equation*}
and outputs an increasing subsequence with  expected length
$${\mathbb E}L_{\varphi^*}(t)= \sqrt{2t}-\frac{1}{12}\log t+c^*+\frac{\sqrt{2}}{144\sqrt{t}}+O(t^{-1}),~~~t\to\infty,$$
and variance 

$${\rm Var}(L_{\varphi^*}(t))\sim \frac{\sqrt{2t}}{3} +\frac{1}{72}\log t+c_2+ O(t^{-1/2}\log t) .$$

\item[\rm(b)]  The strategy with
$\varphi_0(t):=\sqrt{{2}/{t}}\,\, \wedge  1$
outputs an increasing subsequence with the expected length
$${\mathbb E}L_{\varphi_0} (t)= \sqrt{2t}-\frac{1}{12}\log t+c_0+\frac{\sqrt{2}}{72\sqrt{t}}+ O( t^{-1}),~~~t\to\infty,$$
and variance 
$${\rm Var}(L_{\varphi_0}(t))\sim \frac{\sqrt{2t}}{3} +\frac{1}{24}\log t+c_3+   O(t^{-1/2}\log t) ,~~~t\to\infty.$$

\item [\rm(d)] If  $\varphi(t)\sim \sqrt{2/t} + O(t^{-1})$ then a central limit theorem holds:
$$ \sqrt{3}\,\,\frac{L_\varphi (t)-\sqrt{2t}}{(2t)^{1/4}}\,\dod \,{\cal N}(0,1),~~~t\to\infty.$$

\end{enumerate}
\end{thm}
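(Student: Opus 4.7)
The plan is to deduce Theorem \ref{final} from the preceding analysis of the piecewise deterministic process $Z|z$ by the change of variable $z=\sqrt{t}$ and the dictionary between the acceptance window $\varphi$ and the control function $\theta$ established in Section 5 via matching jump intensities, $4\lambda(z)=2z\varphi(z^2)$ with $\lambda(z)=\theta(z)-\theta^{2}(z)/(2z)$, so that $L_\varphi(t)\,\od\,N_\theta(\sqrt{t})$. Inverting this relation gives $\varphi(z^2)=2\theta(z)/z-\theta^{2}(z)/z^2$, and, for bounded $\theta$, $\theta(z)=1/\sqrt{2}+a/z+O(z^{-2})$ is equivalent to $\varphi(t)=\sqrt{2/t}+(2a-\tfrac12)/t+O(t^{-3/2})$.

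For part (a), I substitute the optimal expansion $\theta^*(z)=1/\sqrt{2}+1/(12z)+O(z^{-2})$ from Theorem \ref{opt-subopt} into the dictionary to read off $\varphi^*(t)=\sqrt{2/t}-1/(3t)+O(t^{-3/2})$. The expected-length formula then follows by plugging $z=\sqrt{t}$ into the expansion (\ref{expan3}) and using $\log\sqrt{t}=\tfrac12\log t$; the variance asymptotic is obtained in the same way by substituting $z=\sqrt{t}$ into the closed-form expansion of ${\rm Var}(N_{\theta^*}(z))$ derived at the end of Section 8.

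For part (b), the control associated under the dictionary to $\varphi_0(t)=\sqrt{2/t}\wedge 1$ is $\theta_0(z)=\sqrt{1/2}\wedge z$, the control treated in the display at the end of Section 7 and falling under (\ref{thetagamma}) with $\gamma=0$. The expected-length and variance statements then transcribe, via $z=\sqrt{t}$, from the corresponding displays for $u_{\theta_0}$ and for ${\rm Var}(N_\theta)$ in the class (\ref{thetagamma}); the coefficient $1/24$ of $\log t$ in the variance arises from halving the $(1/12)\log z$ obtained from the general formula at $\gamma=0$.

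For part (d) I first verify that the hypothesis $\varphi(t)=\sqrt{2/t}+O(t^{-1})$ is, for bounded $\theta$, exactly the image under the dictionary of condition (\ref{LT}) of Section 9. The CLT (\ref{CLTN}) then applies to $N_\theta(\sqrt{t})$ with $\mu=1/\sqrt{2}$ and $\sigma^2=1/6$, and a direct calculation identifies the resulting scaling $\sigma\mu^{-3/2}\sqrt{z}=2^{3/4}t^{1/4}/\sqrt{6}=(2t)^{1/4}/\sqrt{3}$ together with the centring $z\mu^{-1}=\sqrt{2t}$. The main substantive work lives upstream in Sections 4--9; once Theorem \ref{opt-subopt}, the variance computation of Section 8, and the renewal-comparison CLT of Section 9 are in place, Theorem \ref{final} is essentially bookkeeping, and the one delicate point to check is that the planar hypothesis in (d) is indeed the precise translation of the one-dimensional hypothesis (\ref{LT}), so that the CLT of Section 9 may be invoked verbatim rather than being reproved.
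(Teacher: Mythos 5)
Your plan — translate Theorem \ref{final} from the one-dimensional process $N_\theta(z)$ back to $L_\varphi(t)$ via $z=\sqrt t$ and the intensity matching $4\lambda(z)=2z\varphi(z^2)$ — is the paper's approach, and your dictionary
$$\theta(z)=\tfrac{1}{\sqrt 2}+\tfrac{a}{z}+O(z^{-2}) \iff \varphi(t)=\sqrt{2/t}+\tfrac{2a-1/2}{t}+O(t^{-3/2})$$
is correct. Parts (a) and (d) then follow by the bookkeeping you describe: $a=1/12$ gives $2a-\tfrac12=-\tfrac13$, the expansion (\ref{expan3}) and the variance display of Section 8 transcribe under $\log\sqrt t=\tfrac12\log t$, condition (\ref{LT}) is the image of $\varphi=\sqrt{2/t}+O(t^{-1})$, and $\sigma\mu^{-3/2}\sqrt z=(2t)^{1/4}/\sqrt3$. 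Here you are in line with the paper.

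In part (b), however, your own dictionary contradicts the identification you then make. You state that the control corresponding to $\varphi_0(t)=\sqrt{2/t}\wedge 1$ is $\theta_0(z)=\sqrt{1/2}\wedge z$, i.e.\ $a=\gamma=0$. But $\varphi_0(t)=\sqrt{2/t}+0\cdot t^{-1}+\cdots$ forces $2a-\tfrac12=0$, hence $a=\gamma=1/4$; equivalently, solving $4\lambda(z)=2z\varphi_0(z^2)$ exactly gives $\theta(z)=z(1-\sqrt{1-\sqrt2/z})=\tfrac1{\sqrt2}+\tfrac1{4z}+O(z^{-2})$, while $\theta_0$ corresponds to $\varphi(t)=\sqrt{2/t}-\tfrac1{2t}$, not to $\varphi_0$. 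Feeding $\gamma=1/4$ into (\ref{gaga}) gives the $z^{-1}$ coefficient $\sqrt2\bigl(\tfrac1{72}-\tfrac1{24}+\tfrac1{16}\bigr)=\tfrac{5\sqrt2}{144}$, and into the variance formula $\tfrac{1-8\gamma}{12}$ gives $-\tfrac{1}{12}\log z$, i.e.\ $-\tfrac1{24}\log t$; both differ from the coefficients $\tfrac{\sqrt2}{72\sqrt t}$ and $+\tfrac1{24}\log t$ that you (and, apparently, the theorem statement itself) report, which are the ones for $\theta_0$. So either part (b) should be phrased for the strategy $\varphi(t)=\sqrt{2/t}-\tfrac1{2t}$ (the exact image of $\theta_0$), or the coefficients for $\varphi_0=\sqrt{2/t}\wedge1$ must be recomputed with $\gamma=1/4$. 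As written, your step applying the dictionary in (b) is inconsistent with the dictionary you derived and used correctly in (a). This is worth resolving before citing the theorem, even though the paper's own summary appears to carry the same imprecision.
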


\vskip0.3cm

\noindent
The instance of part (d) for the  optimal strategy was proved in \cite{BD2}; this can be compared with 
the distributional limit (\ref{limsta}) for the stationary strategy.

Bruss and Delbaen \cite{BD2} used  concavity of $v$ to prove the bounds
$$ \frac{v(t)}{3}\leq {\rm Var}(L_{\varphi^*}(t))\leq \frac{v(t)}{3}+\frac{1}{(\beta-\sqrt{2\beta})\,6\sqrt{2}}\log\frac{t}{\beta}+2, $$
(for $t$ no too small), where $v(\beta)=2$. For large $t$, the logarithmic term in the  lower bound has coefficient  $-1/36$ (as is seen from  (a)) and 
in the upper bound at least $0.55$ (as can be shown by  estimating $\beta$). These bounds can be compared with the coefficient $1/72$ in part (a).
\vskip0.3cm
\noindent
{\bf Remark} The version of the problem with a fixed number of observations $n$ is more complex,
 because  the time of observation $m$ and the running maximum $y$ cannot be aggregated in a single state variable \cite{A2, SS}.
Nevertheless, one can expect that 
the value function is well approximable 
by a function of $(n-m+1)(1-y)$, hence
an analogue of self-similar strategy in  Theorem \ref{final} (b), that is the strategy with acceptance condition
\begin{equation}\label{ac}
0< \frac{x-y}{1-y}<\sqrt{\frac{2}{(n-m+1)(1-y)}}\,\,{\bigwedge}\,\, 1,
\end{equation}
is close to optimality.  Arlotto et al. \cite{A2} employed (\ref{BD1}) and  a de-Poissonisation argument  to show that, indeed, the strategy given by (\ref{ac})
is within $O(\log n)$ from the optimum for $n$ large.
Extending the methods of the present paper,  the latter result has been strengthened recently in \cite{Se}
: the expected length of subsequence selected with acceptance window (\ref{ac})
is  $\sqrt{2n}-(\log n)/12+O(1)$, and this is within $O(1)$ from the optimum.

\vskip0.2cm
\noindent


\end{document}